\newtheorem{theorem}{Theorem}
\newtheorem{corollary}{Corollary}
\newtheorem{lemma}{Lemma}
\theoremstyle{definition}
\newtheorem{remark}{Remark}
\author{An.An.\,Novikov, O.E.\,Tikhonov}
\title{Characterization of central elements by inequalities}
\begin{document}

\maketitle

{\tiny Kazan Federal University, Kremlievskaia st. 18, 420008, Kazan, Russia

e-mail: a.hobukob@gmail.com, oleg.tikhonov@kpfu.ru

keywords: von Neumann algebra, C*-algebra, center of algebra, characterization, inequalities.

subclass: 46L05, 46L10, 47C15.

Partially supported by Russian Foundation for Basic Research, Grant 14-01-31358.}

{\bf Abstract:}
We propose a list of inequalities which characterize central elements in von Neumann algebras and C*-algebras.

\vspace{10pt}

In this paper we study the possibility to distinguish central elements of operator algebras among all positive elements, by satisfying certain inequalities for positive functionals. A dual problem of characterizing tracial property by inequalities was considered, for instance,  in \cite{Bik13, Bik10, Bik11, DT, Gardner, P&S, STS, Tikh-pos}, and we apply some machinery of those papers. 

Throughout the note,  $\mathcal{M}$ stands for a von Neumann algebra, $\mathcal{M}^{sa}$, $\mathcal{M}^+$, and $\mathcal{M}^\text{pr}$ denote the selfadjoint part, the positive part, and the set of all projections in $\mathcal{M}$, respectively. $\mathcal{Z}$ denotes the center of $\mathcal{M}$ and $\mathbf{1}$ denotes the identity operator.
Let $\mathcal{M}_*$ denote the space of all normal functionals on $\mathcal{M}$, $\mathcal{M}_*^h$ and $\mathcal{M}_*^+$ denote its Hermitian and positive parts. We will use standard notation for multiplication of a functional $\varphi$ by an operator $x$, namely, $x\varphi$, $\varphi x$ and $x \varphi x$ denote the linear functionals 
$y \mapsto \varphi(xy)$, $y\mapsto \varphi(yx)$ and $y\mapsto \varphi(xyx)$, respectively. Recall that a selfadjoint unitary operator in a Hilbert space is said to be a \emph{symmetry}.  

The proof of the following lemma is adapted from \cite[Lemma 1]{STS}.
  
\begin{lemma}\label{lemma1}
Let $a \in \mathcal{M}^+$. If the inequality $\varphi(sas)<\varphi(a)$ holds for some symmetry $s \in \mathcal{M}$ 
and some positive normal functional $\varphi \in \mathcal{M}_*^+$, then there exist positive normal functionals $\psi_1, \psi_2 \in \mathcal{M}_*^+$, such that $|\psi_1-\psi_2|(a)>\psi_1(a)+\psi_2(a)$.
\end{lemma}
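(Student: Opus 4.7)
The plan is to leverage the strict hypothesis $\varphi(sas)<\varphi(a)$ by manufacturing a Hermitian normal functional whose Jordan decomposition supplies the required $\psi_1,\psi_2$. Set $\psi:=\varphi-s\varphi s\in\mathcal{M}_*^h$. Since $s$ is a symmetry one has $\psi(\mathbf{1})=\varphi(\mathbf{1})-\varphi(s^2)=0$, while by hypothesis $\psi(a)=\varphi(a)-\varphi(sas)>0$. Let $\psi=\psi_+-\psi_-$ be its Jordan decomposition, with $\psi_\pm\in\mathcal{M}_*^+$ mutually singular. The condition $\psi(\mathbf{1})=0$ forces $\|\psi_+\|=\|\psi_-\|$, so both parts are nonzero, and the condition $\psi(a)>0$ gives $\psi_+(a)>\psi_-(a)$.

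Next, using the spectral resolution $s=p-q$ with $p:=(\mathbf{1}+s)/2$ and $q:=(\mathbf{1}-s)/2$, a direct computation gives the identity
\[
\varphi(a)-\varphi(sas)=2\bigl(\varphi(paq)+\varphi(qap)\bigr),
\]
which pinpoints the strictness of the hypothesis as strict positivity of an off-diagonal piece of $\varphi$ on $a$ relative to the splitting $\mathbf{1}=p+q$. The candidate positive normal functionals $\psi_1,\psi_2$ will then be built out of $\psi_\pm$ and (possibly) suitable compressions of $\varphi$ by projections that separate the supports of $\psi_+$ and $\psi_-$, following the template of \cite[Lemma~1]{STS}.

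The delicate step, and the main obstacle, is to arrange the strict inequality $|\psi_1-\psi_2|(a)>\psi_1(a)+\psi_2(a)$. Any choice with $\psi_1-\psi_2=\psi$ and $\psi_1,\psi_2$ mutually singular (for example the naive $\psi_1:=\psi_+$, $\psi_2:=\psi_-$) yields only equality $|\psi_1-\psi_2|(a)=\psi_1(a)+\psi_2(a)$, so the construction must incorporate a carefully chosen perturbation that breaks this equality in the correct direction. Translating the quantitative slack $\varphi(a)-\varphi(sas)>0$ into strict domination of $|\psi_1-\psi_2|(a)$ over $\psi_1(a)+\psi_2(a)$—rather than merely matching it—is the technical heart of the argument, to be handled by adapting the scheme of \cite[Lemma~1]{STS}.
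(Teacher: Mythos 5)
Your proposal sets up the problem correctly and, to its credit, explicitly recognizes the central difficulty: any pair with $\psi_1-\psi_2=\varphi-s\varphi s$ and $\psi_1,\psi_2$ mutually singular gives only the equality $|\psi_1-\psi_2|(a)=\psi_1(a)+\psi_2(a)$. But the proof stops exactly there. The construction of $\psi_1,\psi_2$ achieving the \emph{strict} inequality --- which you yourself call ``the technical heart of the argument'' --- is deferred to ``adapting the scheme of \cite[Lemma~1]{STS}'' and never carried out. As it stands this is a plan, not a proof: no candidate functionals are exhibited, no perturbation is specified, and nothing is verified. Moreover, the direction you are steering toward (the Jordan decomposition of $\varphi-s\varphi s$ plus compressions separating the supports of $\psi_\pm$) is not the route that actually works, and it is not clear how to make it work, since mutual singularity is precisely the property you must destroy.

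For comparison, the paper's construction abandons the Jordan decomposition entirely. It sets, for $\lambda>0$, $\psi_1^{\lambda}=\lambda\,v_1^{\lambda}\varphi\,v_1^{\lambda}$ and $\psi_2^{\lambda}=\lambda\,v_2^{\lambda}\varphi\,v_2^{\lambda}$ with $v_{1,2}^{\lambda}=s\pm\lambda^{-1}\mathbf{1}$; these are manifestly positive and far from mutually singular. Their difference is $2(\varphi s+s\varphi)$ \emph{independently of} $\lambda$, and one computes $|\psi_1^{\lambda}-\psi_2^{\lambda}|=2(\varphi+s\varphi s)$ by showing that $\varphi s+s\varphi$ attains its norm at $s$, so $|\psi_1^{\lambda}-\psi_2^{\lambda}|(a)=2(2-\varepsilon)\varphi(a)$ where $\varepsilon=(\varphi(a)-\varphi(sas))/\varphi(a)\in(0,1]$. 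Meanwhile $(\psi_1^{\lambda}+\psi_2^{\lambda})(a)=(\lambda(1-\varepsilon)+\lambda^{-1})\varphi(a)$ \emph{does} depend on $\lambda$ and can be driven down to $2\sqrt{1-\varepsilon}\,\varphi(a)<2(2-\varepsilon)\varphi(a)$. So the strictness comes from optimizing over a one-parameter family in which the absolute value of the difference stays fixed while the sum shrinks --- an idea absent from your outline. You would need to supply this (or some genuinely complete alternative) for the proof to stand.
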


\begin{proof}
For $\lambda>0$ define normal functionals 
$
\psi^{\lambda}_1=\lambda s\varphi s + \varphi s + s\varphi + \lambda ^{-1}\varphi$ and
$\psi^{\lambda}_2=\lambda s\varphi s - \varphi s - s\varphi + \lambda ^{-1} \varphi .
$
Since $\psi_1^\lambda = \lambda \, v_1^\lambda \, \varphi \,v_1^\lambda$ with 
$v_1^\lambda = s + \lambda ^{-1} \mathbf{1}$ and 
$\psi_2^\lambda = \lambda \, v_2^\lambda \, \varphi \,v_2^\lambda$ with $v_2^\lambda = s - \lambda ^{-1} \mathbf{1}$, 
those functionals are positive. 

Let us demonstrate that $|\psi_1^\lambda - \psi_2^\lambda|= 2\varphi + 2s\varphi s$. 
Clearly, $|\psi_1^\lambda - \psi_2^\lambda|=2|\varphi s + s \varphi|$. 
Observe that $\|\varphi s + s \varphi\|=(\varphi s+ s \varphi)(s)$ since  
$\| \varphi s + s \varphi \| \le \| \varphi s \| + \| s \varphi \| \le 2 \| \varphi \| =
 2 \varphi (\mathbf{1})$ and
$\| \varphi s + s \varphi \| \ge |(\varphi s + s \varphi)(s) | = 
(\varphi s + s \varphi )(s) = 2 \varphi (\mathbf{1})$.
By the construction of the absolute value of normal functional \cite[the proof of Theorem III.4.2]{Takesaki}, we have $|\psi_1^\lambda - \psi_2^\lambda|= 
2 | \varphi s + s \varphi | = 2 s(\varphi s + s \varphi ) = 2(\varphi + s \varphi s )$.

With the notation $\varepsilon=(\varphi(a)-\varphi(sas))/{\varphi(a)}$, we obtain by a straightforward calculation
$|\psi_1^\lambda - \psi_2^\lambda|(a) = 2(2-\varepsilon)\varphi(a)$ as well as
$(\psi_1^\lambda+\psi_2^\lambda)(a)=(\lambda(1-\varepsilon)+\lambda^{-1})\varphi(a).$
Since $\min\limits_{\lambda>0}\{\lambda(1-\varepsilon)+\lambda^{-1} \}=2\sqrt{1-\varepsilon}$ and $2-\varepsilon>\sqrt{1-\varepsilon}$ for any $\varepsilon\in(0,1)$, 
there exists $\lambda_0 >0$ such that 
$2(2-\varepsilon)>\lambda_0(1-\varepsilon)+\lambda_0^{-1}$.
Therefore $|\psi_1^{\lambda_0} - \psi_2^{\lambda_0}|(a)>(\psi_1^{\lambda_0}+\psi_2^{\lambda_0})(a)$.
\end{proof}

\begin{theorem}
For $a \in \mathcal{M}^+$ the following conditions are equivalent:
\begin{itemize}
 
\item [$(i)$]
$a$ lies in the center $\mathcal{Z}$ of $\mathcal{M}$;
 
\item [$(ii)$] 
$pap \le a$ for each $p \in \mathcal{M}^\textrm{pr}$;    

\item [$(iii)$]
$\varphi^+ (a) \le \varphi_1 (a)$ for each $\varphi \in \mathcal{M}_*^h$ and any decomposition $\varphi = \varphi_1 - \varphi_2$ with $\varphi_1 , \varphi_2 \in \mathcal{M}_*^+$;   

\item [$(iv)$]
the mapping $\varphi \mapsto \varphi^+(a)\ (\varphi \in \mathcal{M}^h_*)$ is monotone, 

i.\,e., $\varphi , \psi \in \mathcal{M}_*^h$, $\varphi \le \psi$ imply $\varphi^+ (a) \le \psi^+ (a)$; 

\item [$(v)$]
the mapping $\varphi \mapsto \varphi^+(a)\ (\varphi \in \mathcal{M}^h_*)$ is subadditive, 

i.\,e., $(\varphi + \psi )^+ (a) \le \varphi^+ (a) + \psi^+ (a)$ for all $\varphi , \psi \in \mathcal{M}_*^h$; 

\item [$(vi)$]
the mapping $\varphi \mapsto |\varphi |(a)\ (\varphi \in \mathcal{M}^h_*)$ is subadditive, 

i.\,e., $|\varphi + \psi |(a) \le |\varphi |(a) + |\psi |(a)$ for all $\varphi , \psi \in \mathcal{M}_*^h$.
\end{itemize}
\end{theorem}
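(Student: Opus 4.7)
The plan is to establish the cyclic chain (i) $\Rightarrow$ (ii) $\Rightarrow$ (iii) $\Rightarrow$ (iv) $\Rightarrow$ (v) $\Rightarrow$ (vi) $\Rightarrow$ (i), invoking Lemma 1 only at the closing step. For (i) $\Rightarrow$ (ii), centrality of $a$ gives $a^{1/2}\in\mathcal{Z}$, so $pap=pa$ (from $pa=ap$) and $a-pa=a^{1/2}(\mathbf{1}-p)a^{1/2}\ge 0$, hence $pap\le a$.

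For (ii) $\Rightarrow$ (iii), let $e\in\mathcal{M}^\textrm{pr}$ denote the support projection of $\varphi^+$. Since the support of $\varphi^-$ is orthogonal to $e$, one has $\varphi^-(exe)=0$, and therefore $\varphi^+(a)=\varphi(eae)=\varphi_1(eae)-\varphi_2(eae)$. Now $\varphi_2(eae)\ge 0$, while (ii) applied to $e$ gives $eae\le a$ and hence $\varphi_1(eae)\le\varphi_1(a)$; combining yields $\varphi^+(a)\le\varphi_1(a)$. The remaining implications (iii) $\Rightarrow$ (iv) $\Rightarrow$ (v) $\Rightarrow$ (vi) are routine. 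Given $\varphi\le\psi$, the decomposition $\varphi=\psi^+-(\psi^+-\varphi)$ with $\psi^+-\varphi=\psi^-+(\psi-\varphi)\ge 0$ and (iii) yield (iv). Since $\varphi+\psi\le\varphi^++\psi^+$ (their difference equals $\varphi^-+\psi^-\ge 0$) and the right-hand side coincides with its own positive part, (iv) yields (v). Finally, the identity $|\phi|=2\phi^+-\phi$ converts (v) into (vi).

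For (vi) $\Rightarrow$ (i), I argue by contradiction. If some $\varphi\in\mathcal{M}_*^+$ and symmetry $s$ satisfied $\varphi(sas)<\varphi(a)$, Lemma 1 would furnish $\psi_1,\psi_2\in\mathcal{M}_*^+$ with $|\psi_1-\psi_2|(a)>\psi_1(a)+\psi_2(a)=|\psi_1|(a)+|{-\psi_2}|(a)$, contradicting (vi) applied to $\psi_1$ and $-\psi_2$. Hence $\varphi(sas)\ge\varphi(a)$ for every $\varphi\in\mathcal{M}_*^+$ and every symmetry $s$, which amounts to $sas-a\ge 0$ as an operator. Conjugating this inequality by $s$ yields $s(sas)s-sas=a-sas\ge 0$, so $sas=a$. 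Since every projection equals $(s+\mathbf{1})/2$ for a symmetry $s$, $a$ commutes with every projection in $\mathcal{M}$, hence with every element of $\mathcal{M}$, i.e., $a\in\mathcal{Z}$.

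I expect (vi) $\Rightarrow$ (i) to be the trickiest step: one must notice that Lemma 1's strict inequality precisely negates (vi) on $\psi_1$ and $-\psi_2$, and then exploit the involution $a\mapsto sas$ to upgrade the one-sided operator inequality $sas\ge a$ to equality. The other implications are either immediate from centrality ((i) $\Rightarrow$ (ii)), a clean support-projection identity ((ii) $\Rightarrow$ (iii)), or purely formal manipulations with the Jordan decomposition.
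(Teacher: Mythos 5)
Your proposal is correct and follows essentially the same route as the paper: the same cyclic chain $(i)\Rightarrow\cdots\Rightarrow(vi)\Rightarrow(i)$, the same support-projection argument for $(ii)\Rightarrow(iii)$, and Lemma~1 invoked by contradiction at the final step. The only (harmless) variations are cosmetic: you use $|\phi|=2\phi^+-\phi$ in place of the paper's separate treatment of negative parts, and you upgrade $sas\ge a$ to $sas=a$ by conjugating the operator inequality with $s$ rather than at the level of functionals.
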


\begin{proof}
$(i) \Rightarrow (ii)$ 
For $a \in \mathcal{Z}^+$ and $p \in \mathcal{M}^\textrm{pr}$, one has $a = pap + (\mathbf{1} -p)a(\mathbf{1} -p) \ge pap$. 

\smallskip
$(ii) \Rightarrow (iii)$ 
For $\varphi \in \mathcal{M}_*^h$, let $p$ be the support projection of $\varphi^+$ (see \cite[Section III.4]{Takesaki}).     
Then $\varphi^+ (a) = (p \varphi p) (a) \le (p (\varphi + \varphi_2) p) (a) = 
(p \varphi_1 p) (a) = \varphi_1 (pap)
\le \varphi_1 (a)$.

\smallskip
$(iii) \Rightarrow (iv)$ 
This follows from the equality
$\varphi = \psi^+ - (\psi^- + (\psi - \varphi ))$.

\smallskip
$(iv) \Rightarrow (v)$ 
For $\varphi , \psi \in \mathcal{M}_*^h$, one has $\varphi \le \varphi^+$ and $\psi \le \psi^+$, hence $\varphi + \psi \le \varphi^+ + \psi^+$. Then the condition $(iv)$ entails  
$(\varphi + \psi )^+ (a) \le \varphi^+ (a) + \psi^+ (a)$. 

\smallskip
$(v) \Rightarrow (vi)$ 
By the condition $(v)$, for $\varphi , \psi \in \mathcal{M}_*^h$, it holds 
$(\varphi + \psi )^+ (a) \le \varphi^+ (a) + \psi^+ (a)$. 
Also, $(\varphi + \psi )^- (a) = (-\varphi - \psi )^+ (a) \le 
(-\varphi )^+ (a) + (-\psi)^+ (a) = \varphi^- (a) + \psi^- (a)$. 
Hence $|\varphi + \psi |(a) = (\varphi + \psi )^+ (a) + (\varphi + \psi )^- (a) \le 
\varphi^+ (a) + \psi^+ (a) + \varphi^- (a) + \psi^- (a) = |\varphi |(a) + |\psi |(a)$.  

\smallskip
$(vi) \Rightarrow (i)$ 
If an operator $a \in \mathcal{M}^+$ satisfies $(vi)$, then it follows from Lemma \ref{lemma1} 
that $\varphi(sas)=\varphi(a)$ for each symmetry $s$ in $\mathcal{M}$ and any $\varphi$ in $\mathcal{M}_*^h$. 
Therefore $a=sas$ for each symmetry $s$ in $\mathcal{M}$. 
As it is easily seen, the latter implies that $a$ commutes with each projection in $\mathcal{M}$ and therefore lies in $\mathcal{Z}$.    
\end{proof}

\begin{remark}
Clearly, for $a \in \mathcal{M}^+$, $(v)$ is equivalent to the condition

\begin{itemize}
\item [$(vii)$] 
{\it the mapping $\varphi \mapsto \varphi^+(a)\ (\varphi \in \mathcal{M}^h_* )$ is convex.}
\end{itemize}
As well, $(vi)$ is equivalent to the conditions
\begin{itemize}

\item [$(viii)$] 
{\it
the mapping $\varphi \mapsto |\varphi|(a)\ (\varphi \in \mathcal{M}^h_* )$ is convex.}
\end{itemize}
It follows from Theorem 1 that for $a \in \mathcal{M}^+$ each of the conditions $(vii)$, $(viii)$ is equivalent to 
$a \in \mathcal{Z}$. 
\end{remark}

\begin{corollary} \label{cor1}
For $a \in \mathcal{M}^+$, each of the conditions $(i)$ -- $(viii)$ is equivalent to each of the following conditions:
\begin{itemize}
 
\item [$(ix)$] 
$|\varphi|(a)=\|a^\frac{1}{2}\varphi a^\frac{1}{2}\|$ for all $\varphi$ in $\mathcal{M}_*$;  

\item [$(x)$]
the mapping $\varphi \mapsto |\varphi |(a)$ is subadditive on $\mathcal{M}_*$, 

i.\,e., $|\varphi + \psi |(a) \le |\varphi |(a) + |\psi |(a)$ for all $\varphi , \psi \in \mathcal{M}_*$.
\end{itemize}

\end{corollary}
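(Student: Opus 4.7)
I plan to close the cycle $(i)\Rightarrow (ix)\Rightarrow (x)\Rightarrow (vi)$, with the return implication $(vi)\Rightarrow (i)$ supplied by Theorem~1. The last two links are essentially formal: for $(x)\Rightarrow (vi)$ one simply restricts the subadditivity from $\mathcal{M}_*$ to $\mathcal{M}_*^h\subset\mathcal{M}_*$, and for $(ix)\Rightarrow (x)$ the triangle inequality in $\mathcal{M}_*$ gives
\[
|\varphi+\psi|(a) = \|a^{1/2}(\varphi+\psi)a^{1/2}\| \le \|a^{1/2}\varphi a^{1/2}\| + \|a^{1/2}\psi a^{1/2}\| = |\varphi|(a)+|\psi|(a).
\]

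The substance is the implication $(i)\Rightarrow (ix)$. Let $a\in\mathcal{Z}^+$, so that $a^{1/2}\in\mathcal{Z}$ commutes with every element of $\mathcal{M}$. Take the polar decomposition $\varphi=u|\varphi|$ of $\varphi\in\mathcal{M}_*$ as in \cite[Section III.4]{Takesaki}, with $u\in\mathcal{M}$ a partial isometry. Centrality of $a^{1/2}$ allows it to be pulled past $u$, yielding
\[
a^{1/2}\varphi a^{1/2} = u\cdot (a^{1/2}|\varphi|a^{1/2}),
\]
where the right-hand factor $a^{1/2}|\varphi|a^{1/2}\in\mathcal{M}_*^+$ has norm $|\varphi|(a)$. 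Since $\|u\|\le 1$, one obtains $\|a^{1/2}\varphi a^{1/2}\|\le|\varphi|(a)$. For the reverse inequality, I would use the dual identity $|\varphi|=u^*\varphi$ together with centrality to write $a^{1/2}|\varphi|a^{1/2}=u^*\cdot(a^{1/2}\varphi a^{1/2})$, whence
\[
|\varphi|(a) = (a^{1/2}|\varphi|a^{1/2})(\mathbf{1}) = (a^{1/2}\varphi a^{1/2})(u^*) \le \|a^{1/2}\varphi a^{1/2}\|.
\]

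The only nontrivial step is $(i)\Rightarrow (ix)$; the main technical care is in handling the polar decomposition identities $\varphi=u|\varphi|$ and $|\varphi|=u^*\varphi$ and using centrality of $a^{1/2}$ to commute it past $u$. Conceptually, this says that for central positive $a$ the ``sign'' of $\varphi$ is preserved by the similarity $\varphi\mapsto a^{1/2}\varphi a^{1/2}$, so the absolute values transform by the same similarity, and the norm of the result is exactly $|\varphi|(a)$.
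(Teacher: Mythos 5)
Your proposal is correct and follows essentially the same route as the paper: the paper likewise proves $(i)\Rightarrow(ix)$ via the polar decomposition $\varphi=u|\varphi|$, obtaining the lower bound from $|\varphi|(a)=(a^{1/2}\varphi a^{1/2})(u^*)\le\|a^{1/2}\varphi a^{1/2}\|$ and the upper bound from $\|a^{1/2}u|\varphi|a^{1/2}\|=\|u\,a^{1/2}|\varphi|a^{1/2}\|\le\|a^{1/2}|\varphi|a^{1/2}\|=|\varphi|(a)$, then gets $(ix)\Rightarrow(x)$ by the triangle inequality and $(x)\Rightarrow(vi)$ by restriction to $\mathcal{M}_*^h$.
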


\begin{proof} 
Let $a \in \mathcal{Z}^+$. For $\varphi \in \mathcal{M}_*$, let $\varphi = u |\varphi|$ be the polar decomposition \cite[Section III.4]{Takesaki}. Then $|\varphi|(a)= (u^*\varphi)(a)= \varphi(u^*a) = \varphi (a^\frac{1}{2}u^*a^\frac{1}{2}) = 
(a^\frac{1}{2} \varphi a^\frac{1}{2})(u^* ) \leq \|a^\frac{1}{2} \varphi a^\frac{1}{2}\|$. 
On the other hand, $\|a^\frac{1}{2}\varphi a^\frac{1}{2}\| = 
\|a^\frac{1}{2}u |\varphi| a^\frac{1}{2}\| = \|u a^\frac{1}{2}|\varphi| a^\frac{1}{2} \| \leq 
\|a^\frac{1}{2}|\varphi| a^\frac{1}{2}\| = |\varphi|(a).$ Hence $(ix)$ is satisfied.
 
\smallskip
$(ix) \Rightarrow (x)$. If $(ix)$ holds true, then we have
$|\varphi + \psi |(a) = \| a^\frac{1}{2} (\varphi + \psi) a^\frac{1}{2} \| \le 
\| a^\frac{1}{2} \varphi a^\frac{1}{2} \| + \| a^\frac{1}{2} \psi a^\frac{1}{2} \| =
| \varphi |(a) + |\psi |(a) .
$

\smallskip
Of course, $(x)$ implies $(vi)$. 
\end{proof}

\begin{remark}
From the preceding proof, it is seen that we can add the following condition to the list 
$(i)$ -- $(x)$ of equivalent conditions.
\begin{itemize}
 
\item [$(xi)$] 
{\it $|\varphi|(a)=\|a^\frac{1}{2}\varphi a^\frac{1}{2}\|$ for all $\varphi$ in $\mathcal{M}_*^h$}.  
\end{itemize}
  
\end{remark}

The following theorem is an analog of Gardner's characterization of traces by ``triangle inequality'' \cite{Gardner} (see, also, \cite{P&S}).

\begin{theorem}\label{gardner_crit} 
 A positive element $a$ of $\mathcal{M}$ belongs to the center of $\mathcal{M}$ if and only 
 if the inequality  $|\varphi(a)|\leq|\varphi|(a)$ holds for any $\varphi$ in $\mathcal{M}_*$.
\end{theorem}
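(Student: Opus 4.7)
The plan is to prove each direction using the machinery already developed.

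For the forward direction $a\in\mathcal{Z}^+\Rightarrow|\varphi(a)|\le|\varphi|(a)$: condition $(ix)$ of Corollary \ref{cor1} gives $|\varphi|(a)=\|a^{1/2}\varphi a^{1/2}\|$, and writing $\varphi(a)=(a^{1/2}\varphi a^{1/2})(\mathbf{1})$ yields $|\varphi(a)|\le\|a^{1/2}\varphi a^{1/2}\|=|\varphi|(a)$ immediately.

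For the converse, my strategy is to establish the bound $|\psi_1-\psi_2|(a)\le\psi_1(a)+\psi_2(a)$ for every $\psi_1,\psi_2\in\mathcal{M}_*^+$; the contrapositive of Lemma~\ref{lemma1} then forces $\varphi(sas)\ge\varphi(a)$ for every $\varphi\in\mathcal{M}_*^+$ and every symmetry $s\in\mathcal{M}$, and substituting $s\varphi s\in\mathcal{M}_*^+$ for $\varphi$ gives the reverse inequality, so $\varphi(sas)=\varphi(a)$ for all such $\varphi$ and $s$. Since positive normal functionals separate the self-adjoint part of $\mathcal{M}$, we get $a=sas$ for every symmetry $s$; the argument at the end of the proof of Theorem~1 then puts $a$ in $\mathcal{Z}$.

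To obtain the subadditivity I would proceed in two steps. First, for any symmetry $s\in\mathcal{M}$ and any $\omega\in\mathcal{M}_*^+$, because $s$ is unitary the polar decomposition of $s\omega\in\mathcal{M}_*$ satisfies $|s\omega|=\omega$, and applying the hypothesis to the functional $s\omega$ gives $|\omega(sa)|=|(s\omega)(a)|\le|s\omega|(a)=\omega(a)$. Second, given $\psi_1,\psi_2\in\mathcal{M}_*^+$, set $\varphi:=\psi_1-\psi_2$ with Jordan decomposition $\varphi=\varphi^+-\varphi^-$ and orthogonal supports $p_\pm$, and form the symmetry $\tilde s:=p_+-p_-+(\mathbf{1}-p_+-p_-)\in\mathcal{M}$. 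The support relations $\varphi^\pm(z)=\varphi^\pm(p_\pm z)$ quickly give $\varphi(\tilde s y)=\varphi^+(y)+\varphi^-(y)=|\varphi|(y)$, so that
\[
|\psi_1-\psi_2|(a)=\psi_1(\tilde s a)-\psi_2(\tilde s a)\le|\psi_1(\tilde s a)|+|\psi_2(\tilde s a)|\le\psi_1(a)+\psi_2(a)
\]
by the first step.

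The main technical point I expect to handle carefully is the polar-decomposition bookkeeping, in particular the identification $|s\omega|=\omega$ (rather than, say, $s\omega s$) under the paper's conventions for multiplication of a functional by an operator, and the check that extending the partial symmetry $p_+-p_-$ to the genuine symmetry $\tilde s$ does not spoil the identity $|\varphi|(\,\cdot\,)=\varphi(\tilde s\,\cdot\,)$.
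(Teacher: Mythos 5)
Your proof is correct, but the nontrivial direction follows a genuinely different route from the paper's. The forward implication is identical (both reduce to condition $(ix)$ of Corollary~\ref{cor1}). For the converse, the paper argues directly: from $|(u\psi)(a)|\le\psi(a)$ for unitaries $u$ it invokes the Russo--Dye theorem to get $|\psi(xa)|\le\|x\|\psi(a)$ for all $x$, concludes that $\psi a$ attains its norm at $\mathbf{1}$ and is therefore positive, hence Hermitian, which gives $\psi(xa)=\psi(ax)$ for selfadjoint $x$ and so $a\in\mathcal{Z}$. You instead use the hypothesis only on functionals of the form $\tilde s\omega$ with $\tilde s$ a symmetry, manufacture the relevant symmetry $\tilde s=\mathbf{1}-2p_-$ from the orthogonal supports of the Jordan decomposition of $\psi_1-\psi_2$ to obtain $|\psi_1-\psi_2|(a)\le\psi_1(a)+\psi_2(a)$, and then feed this into the contrapositive of Lemma~\ref{lemma1} and the tail of the proof of Theorem~1 ($a=sas$ for all symmetries, hence $a$ commutes with all projections). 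Your route is more self-contained relative to the paper's own machinery --- it recycles Lemma~\ref{lemma1} and avoids both Russo--Dye and the norm-attainment criterion for positivity from Dixmier --- at the cost of the Jordan-decomposition bookkeeping; the paper's route is shorter but leans on those two classical external facts. All the delicate points you flag check out: under the paper's convention $x\varphi\colon y\mapsto\varphi(xy)$ one does have $|\tilde s\omega|=\omega$ (this is exactly the identity $|u\psi|=\psi$ the paper itself uses), the support relations give $p_+\tilde s=p_+$ and $p_-\tilde s=-p_-$ so that $\varphi(\tilde s\,\cdot\,)=|\varphi|(\,\cdot\,)$, and the substitution $\varphi\mapsto s\varphi s$ upgrades $\varphi(sas)\ge\varphi(a)$ to equality. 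So the argument is complete.
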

\begin{proof}
Taking into account Corollary \ref{cor1}, for $a \in \mathcal{Z}$ and $\varphi \in \mathcal{M}_*$, we get 
$|\varphi(a)| = | (a^\frac{1}{2}\varphi a^\frac{1}{2})(\mathbf{1})| \leq \|a^\frac{1}{2}\varphi a^\frac{1}{2}\|=|\varphi|(a)$.

Let $|\varphi(a)|\leq|\varphi|(a)$ for any $\varphi$ in $\mathcal{M}_*$, then $|(u\psi)(a)|=|\psi(ua)|\leq \psi(a)$ for any unitary $u$ and any positive normal $\psi$.
Since the unit ball of $\mathcal{M}$ is the closed convex hull of the set of unitaries \cite{R&D} (see also \cite[1.1.12]{Pedersen}), it follows 
that $|\psi(xa)|\leq \|x\|\psi(a)$ for every $x$ in $\mathcal{M}$. But then the functional $\psi a$ attains
its norm at $\mathbf{1}$ and is therefore positive \cite[2.1.9]{Dixmier}. If $x \in \mathcal{M}^{sa}$
then $\psi(xa) = \overline{\psi(xa)} = \psi(ax)$. It follows that $xa=ax$ for any $x \in \mathcal{M}^{sa}$ 
and therefore $a$ belongs to the center.
\end{proof}
 
\begin{corollary} \label{cor2}
A positive element $a$ of a C*-algebra $\mathcal{A}$ belongs to the center of $\mathcal{A}$ if and only if the inequality  $|\varphi(a)|\leq|\varphi|(a)$ holds for any $\varphi$ in $\mathcal{A}^*$.
\end{corollary}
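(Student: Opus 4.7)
My plan is to reduce Corollary \ref{cor2} to Theorem \ref{gardner_crit} via the universal enveloping von Neumann algebra $\mathcal{M} := \mathcal{A}^{**}$. I will use two standard facts about the canonical embedding $\mathcal{A} \hookrightarrow \mathcal{M}$: (a) the predual $\mathcal{M}_*$ is canonically isometric to $\mathcal{A}^{*}$, and under this identification the polar decomposition in $\mathcal{M}_*$ restricts on $\mathcal{A}$ to the polar decomposition on $\mathcal{A}^{*}$ appearing in the statement (see \cite{Takesaki}); (b) for $a \in \mathcal{A}$, the element $a$ is central in $\mathcal{A}$ if and only if it is central in $\mathcal{M}$, where the forward direction uses the weak-$*$ density of $\mathcal{A}$ in $\mathcal{M}$ together with the separate weak-$*$ continuity of the product in $\mathcal{M}$, and the reverse direction is trivial.

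Granted (a) and (b), both directions of the corollary become immediate applications of Theorem \ref{gardner_crit} in $\mathcal{M}$. Writing $\widetilde\varphi \in \mathcal{M}_*$ for the normal extension of $\varphi \in \mathcal{A}^{*}$, the inequality $|\varphi(a)| \le |\varphi|(a)$ is identical to $|\widetilde\varphi(a)| \le |\widetilde\varphi|(a)$; so the hypothesis for all $\varphi \in \mathcal{A}^{*}$ matches the hypothesis of Theorem \ref{gardner_crit} for $\mathcal{M}$, and conversely. Combined with (b), this yields the claimed equivalence with centrality of $a$ in $\mathcal{A}$.

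The only conceptual point is fact (a); once that is accepted, the proof is a pure translation. As an alternative route avoiding $\mathcal{A}^{**}$, one could rerun the argument of Theorem \ref{gardner_crit} verbatim inside $\mathcal{A}$, since its two ingredients---Russo--Dye's representation of the unit ball as the closed convex hull of the unitaries and Dixmier's characterization of positive functionals as those attaining their norm at $\mathbf{1}$---both hold in any unital C*-algebra, with the non-unital case handled by adjoining a unit.
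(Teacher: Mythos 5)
Your proposal is correct and follows essentially the same route as the paper: pass to the universal enveloping von Neumann algebra ($\mathcal{A}^{**}\cong\mathcal{M}(\pi)$), use the canonical isometry $\mathcal{A}^*\cong\mathcal{M}_*$ under which absolute values correspond, note that centrality of $a$ transfers between $\mathcal{A}$ and its image, and invoke Theorem~\ref{gardner_crit}. The paper fills in the same two ``standard facts'' you cite (compatibility of $|\cdot|$ via Dixmier 12.2.7--12.2.8, and centrality of $\pi(a)$ in $\mathcal{M}(\pi)$), so the arguments coincide.
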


\begin{proof}
Let $(\pi , \mathfrak{H})$ be the universal representation of $\mathcal{A}$ and 
$\mathcal{M}(\pi )$ be the universal enveloping von Neumann algebra \cite[Section III.2]{Takesaki}. 
By construction of $\mathcal{M}(\pi )$, the spaces $\mathcal{A}^*$ and $\mathcal{M}(\pi )_*$ are isometrically isomorphic in a natural way. For $\varphi \in \mathcal{A}^*$, we will denote by $\widetilde \varphi$ the corresponding functional in $\mathcal{M}(\pi )_*$. Note that 
$\widetilde{ |\varphi |} = |\widetilde \varphi |$ by construction of absolute value \cite[12.2.7, 12.2.8]{Dixmier}.

Let $a$ be a positive element of the center of $\mathcal{A}$. It is easy to see that $\pi (a)$ belongs to the center of $\mathcal{M}(\pi )$. Then for any $\varphi \in \mathcal{A}^*$ we have 
$| \varphi (a)| = |\widetilde{\varphi} (\pi (a))| \le |\widetilde{\varphi}| (\pi (a)) =
\widetilde{ |\varphi |} (\pi (a)) = |\varphi | (a)$. 
 
On the other hand, let $a$ be a positive element of $\mathcal{A}$ and the inequality  
$|\varphi(a)| \leq |\varphi|(a)$ hold for any $\varphi$ in $\mathcal{A}^*$. Then 
$|\widetilde{\varphi}(\pi (a))| \leq |\widetilde{\varphi}|(\pi (a))$ for any 
$\widetilde{\varphi}$ in $\mathcal{M}(\pi )_*$, which implies that $\pi (a)$ lies in the center of  
$\mathcal{M}(\pi )$, hence $a$ lies in the center of $\mathcal{A}$.
\end{proof}

\begin{remark}
One can easily adapt the conditions $(iii)$ -- $(xi)$ to the case of C*-algebras and see that each of those modified conditions characterizes central elements.  
\end{remark}

\end{document}